\documentclass[10pt]{elsarticle}

\newtheorem{proposition}{Proposition}

\usepackage{latexsym}
\usepackage{amsmath}
\usepackage{times}
\usepackage{graphicx,graphics, epsfig}
\usepackage{amsfonts}
\DeclareGraphicsExtensions{.pdf,.png,.jpg}

\newenvironment{proof}[1][Proof]{\textbf{#1.} }{\ \rule{0.5em}{0.5em}}

\setlength{\textwidth}{17cm}
\setlength{\textheight}{24cm}
\setlength{\hoffset}{-.5cm}
\setlength{\oddsidemargin}{0.5cm}
\setlength{\evensidemargin}{0.5cm}
\setlength{\topmargin}{-2cm}



\begin{document}

\begin{frontmatter}

\title{Dynamics of a family of Chebyshev-Halley type methods \tnoteref{thanks}}
\tnotetext[thanks]{This research was supported by Ministerio de
Ciencia y Tecnolog\'{i}a MTM2011-28636-C02-02 and by Vicerrectorado
de Investigaci\'on. Universitat Polit\`ecnica de Val\`encia
PAID-06-2010-2285}

\author[IMM]{Alicia Cordero}
\ead{acordero@mat.upv.es}

\author[IMM]{Juan R. Torregrosa \corref{cor1}}
\ead{jrtorre@mat.upv.es}

\author[IMAC]{Pura Vindel}
\ead{jlhueso@mat.upv.es}

\cortext[cor1]{Corresponding author}

\address[IMM]{Instituto Universitario de Matem\'atica Multidisciplinar \\
Universitat Polit\`ecnica de Val\`encia \\ Camino de Vera s/n, 46022 Val\`encia, Spain  }

\address[IMAC]{Instituto de Matem\'aticas y Aplicaciones de Castell\'on \\
Universitat Jaume I \\
Campus de Riu Sec s/n, Castell\'on, Spain }

\begin{abstract}
In this paper, the dynamics of the Chebyshev-Halley family is studied on quadratic polynomials. A singular set, that we call cat set,
appears in the parameter space associated to the family. This cat set has interesting similarities with the Mandelbrot set.
The parameters space has allowed us to find different elements of the family such that can not converge to any root
of the polynomial, since periodic orbits and attractive
strange fixed points appear in the dynamical plane of the corresponding method.

\begin{keyword}  {Nonlinear equations; Iterative methods; Dynamical behavior; quadratic polynomials;
Fatou and Julia sets; Chebyshev-Halley method; non-convergence regions}
\end{keyword}

\end{abstract}
\end{frontmatter}


\section{Introduction}
The application of iterative methods for solving nonlinear equations
$f(z)=0$, with  $f:\mathbb{C}\rightarrow \mathbb{C}$, give rise
to rational functions whose dynamics are not well-known. The
simplest model is obtained when $f(z)$ is a quadratic polynomial and
the iterative process is Newton's method. The study on the dynamics
of Newton's method has been extended to other point-to-point
iterative methods used for solving nonlinear equations, with
convergence order up to three (see, for example \cite{ABBP},
\cite{ABP} and, more recently, \cite{GHR} and \cite{PR}).

The most of the well-known point-to-point cubically convergent
methods belong to the one-parameter family, called Chebyshev-Halley
family,
\begin{equation}\label{ChH}
z_{n+1}=z_{n}-\left( 1+\frac{1}{2}\frac{L_{f}\left( z_{n}\right)
}{1-\alpha L_{f}\left( z_{n}\right) }\right) \frac{f\left(
z_{n}\right) }{f^{\prime }\left( z_{n}\right) },
\end{equation}
where
\begin{equation}
L_{f}\left( z\right) =\frac{f\left( z\right) f^{\prime \prime
}\left( z\right) }{\left( f^{\prime }\left( z\right) \right) ^{2}},
\end{equation}
and $\alpha$ is a complex parameter. This family includes
Chebyshev's method for $\alpha=0$, Halley's scheme for
$\alpha=\frac{1}{2}$, super-Halley's method for $\alpha=1$ and
Newton's method when $\alpha$ tends to $\pm\infty$. As far as we know,
this family was already studied by Werner in 1981 (see \cite{werner}),
and can also be found in \cite{argyros} and \cite{TR}. Moreover, a
geometrical construction in studied in \cite{ABG}. It is
interesting to note that any iterative process given by the
expression:
\begin{equation}
z_{n+1}=z_{n}-H\left( L_{f}\left( z_{n}\right)\right) ,
\end{equation}
where function $H$ satisfies $H(0)=0$, $H'(0)=\frac{1}{2}$ and
$|H''(0)|<\infty$, generates an order three iterative method (see
\cite{gander}).

The family of Chebyshev-Halley has been widely analyzed under
different points of view. For example, in \cite{HS1}, \cite{HS2} and
\cite{YLS}, the authors studied the conditions under the global and
semilocal convergence of this family in Banach spaces are hold.
Also, the semilocal convergence of this family in the complex plane
is presented in \cite{GH}.

Many authors have introduced different variants of these family, in
order to increase its applicability and its order of convergence.
For instance, Osada in \cite{osada} showed a variant able to find
the multiple roots of analytic functions and a procedure to obtain
simultaneously all the roots of a polynomial. On the other hand, in
\cite{K}, \cite{KL} and \cite{AHR} the authors got multipoint
variants of the mentioned family with sixth order of convergence.
Another trend of research about this family have been to avoid the
use of second derivatives (see \cite{X} and \cite{C}) or to design
secant-type variants (see \cite{AEGHH}).

From the numerical point of view, the dynamical behavior of the
rational function associated to an iterative method give us
important information about its stability and reliability. In this
terms, Varona in \cite{V} described the dynamical behavior of
several well-known iterative methods. More recently, in \cite{GHR}
and \cite{HPR}, the authors study the dynamics of different
iterative families.

\subsection{Basic concepts}

The fixed point operator corresponding to the family of Chebyshev-Halley described in (\ref{ChH}) is:
\begin{equation} \label{g}
G\left( z\right) =z-\left( 1+\frac{1}{2}\frac{L_{f}\left( z\right) }{%
1-\alpha L_{f}\left( z\right) }\right) \frac{f\left( z\right) }{f^{\prime
}\left( z\right) }.
\end{equation}
In this work, we study the dynamics of this operator when it is
applied on quadratic polynomials. It is known that the roots of a
polynomial can be transformed by an affine map with no qualitative
changes on the dynamics of family (\ref{ChH}) (see \cite{douady}). So, we can use the
quadratic polynomial $p\left( z\right) = z^{2}+c$. For $p(z)$, the
operator (\ref{g}) corresponds to the rational function:

\begin{equation}
G_{p}\left( z\right) =\frac{z^{4}\left( -3+2\alpha \right)
+6cz^{2}+c^{2}\left( 1-2\alpha \right) }{4z\left( z^{2}\left(
-2+\alpha \right) +\alpha c\right) },
\end{equation}
depending on the parameters $\alpha $ and $c$.

P. Blanchard, in \cite{blanchard}, by considering the conjugacy  map
\begin{equation}
h\left( z\right) =\frac{z-i\sqrt{c}}{z+i\sqrt{c}},
\label{conjugacy}
\end{equation}
with the following properties:
\begin{equation*}
\mbox{i)} \ \  h\left( \infty \right) =1, \ \  \ \ \mbox{ii)} \ \  h\left( i\sqrt{c}\right) =0, \ \  \ \ \mbox{iii)} \ \  h\left( -i\sqrt{c}\right) =\infty ,
\end{equation*}
proved that, for quadratic polynomials, the Newton's operator is
always conjugate to the rational map $z^{2}$. In an analogous way,
it is easy to prove, by using the same conjugacy map, that the
operator $G_{p}\left( z\right)$ is conjugated to the operator
$O_p(z)$
\begin{equation}
O_{p}\left( z\right) =\left( h\circ G_{p}\circ h^{-1}\right) \left(
z\right) =z^{3}\frac{z-2\left( \alpha -1\right) }{1-2\left( \alpha
-1\right) z}. \label{op}
\end{equation}
In addition, the parameter $c$ has been obviated in $O_p(z)$.

In this work, we study the general convergence of methods
(\ref{ChH}) for quadratic polynomials. To be more precise (see
\cite{SM} and \cite{MC}), a given method is generally convergent if
the scheme converges to a root for almost every starting point and
for almost every polynomial of a given degree.

\subsubsection{Dynamical concepts}

Now, let us recall some basic concepts on complex dynamics (see \cite{blanchard2}). Given a
rational function $R:\hat{\mathbb{C}}\rightarrow \hat{\mathbb{C}}$,
where $\hat{\mathbb{C}}$ is the Riemann sphere, the \emph{orbit of a
point} $z_{0} \in \hat{\mathbb{C}}$ is defined as:
\[
\quad z_{0},\,R\left( z_{0}\right) ,\,R^{2}\left( z_{0}\right)
,...,R^{n}\left( z_{0}\right) ,...
\]
We are interested in the study of the asymptotic behavior of the
orbits depending on the initial condition $z_{0},$ that is, we are
going to analyze the phase plane of the map $R$ defined by the
different iterative methods.

To obtain these phase spaces, the first of all is to classify the
starting points from the asymptotic behavior of their orbits.

A $z_{0} \in \hat{\mathbb{C}}$ is called a \emph{fixed point} if it
satisfies: $R\left( z_{0}\right)=z_{0}$. A \emph{periodic point}
$z_{0}$ of period $p>1$ is a point such that $R^{p}\left(
z_{0}\right) =z_{0}$ and $R^{k}\left( z_{0}\right) \neq z_{0}$,
$k<p$. A \emph{pre-periodic point} is a point $z_{0}$ that is not
periodic but there exists a $k>0$ such that $R^{k}\left(
z_{0}\right) $ is periodic. A \emph{critical point} $z_{0}$ is a
point where the derivative of rational function  vanishes,
$R^{\prime }\left( z_{0}\right) =0$.

On the other hand, a fixed point $z_{0}$ is called \emph{attractor}
if $|R^{\prime
}(z_{0})|<1$,  \emph{superattractor} if $|R^{\prime }(z_{0})|=0$, \emph{repulsor} if $%
|R^{\prime }(z_{0})|>1$ and \emph{parabolic} if $|R^{\prime
}(z_{0})|=1$.

\emph{The basin of attraction} of an attractor $\alpha$ is defined as the set of pre-images of any order:
\[
\mathcal{A}\left( \alpha \right) =\{z_{0}\in \hat{\mathbb{C}} \ : \
R^{n}\left( z_{0}\right) {\rightarrow }\alpha, \ n{\rightarrow
}\infty \}.
\]

The set of points $z\in \hat{\mathbb{C}}$ such that their families
$\left\{ R^{n}\left( z\right) \right\} _{n\in \Bbb{N}}$ are normal
in some neighborhood $U\left( z\right) ,$ is the \emph{Fatou set,} $\mathcal{F}%
\left( R\right) ,$ that is, the Fatou set is composed by  the set of
points whose orbits tend to an attractor (fixed point, periodic
orbit or infinity). Its complement in $\hat{\mathbb{C}}$ is the
\emph{Julia set,} $\mathcal{J}\left( R\right) ;$ therefore,  the
Julia set includes all repelling fixed points, periodic orbits and
their pre-images. That means that the basin of attraction of any
fixed point belongs to the Fatou set. On the contrary, the
boundaries of the basins of attraction belong to the Julia set.

The invariant Julia set for Newton's method is the unit circle
$S^1$ and the Fatou set is defined by the two basins of atraction
of the superattractor fixed points: $0$ and $\infty$. On the other
hand, the Julia set for Chebyshev's method applied to quadratic
polynomials is more complicated than for Newton's method and it has
been studied in \cite{kn}. These methods are two elements of the
family (\ref{ChH}). In the following sections, we look for the Julia
and Fatou sets for the rest of the elements of the mentioned family.

The rest of the paper is organized as follows: in Section 2  and 3 we
study the fixed and critical points, respectively, of the operator $O_p(z)$. The dynamical behavior of the family (\ref{ChH}) is analyzed
in Section 4. We finish the work with some remarks and conclusions.

\section{Study of the fixed points}

We are going to study the dynamics of the operator $O_{p}\left(
z\right)$ in function of the parameter $\alpha$. In this section, we
calculate the fixed points of $O_{p}\left( z\right)$ and in the next
one, its critical points. As we will see, the number and the
stability of the fixed and critical points depend on the parameter
$\alpha$.

The fixed points of $O_{p}\left( z\right)$ are the roots of the
equation $O_{p}\left( z\right)=z$, that is, $z=0$, $z=1$ and
\begin{equation}
z=\frac{-3+2\alpha \pm \sqrt{ 5-12\alpha +4\alpha ^{2}}}{2},
\end{equation}
which are the two roots of $z^2+(3-2\alpha)z+1=0$ denoted by
$s_1$ and $s_2$.

The number of the finite fixed points depends on $\alpha $.
Moreover, $s_{1}=\dfrac{1}{s_{2}}$, so that, these points are
equal only if $s_{1}=s_{2}=\pm 1$; this happens when
$5-12\alpha+4\alpha^2=0$, i.e., for $\alpha=\frac{1}{2}$ and
$\alpha=\frac{5}{2}$.

For $\alpha =\frac{1}{2}$, $s_1=s_2=-1$, so $z=-1$ is a fixed point
with double multiplicity. For $\alpha =\frac{5}{2}$, $s_1=s_2=1$, so
$z=1,$ has multiplicity 3.

Summarizing:
\begin{itemize}
\item If $\alpha \neq \frac{1}{2}$ and $\alpha \neq\frac{5}{2}$, there are five different fixed points with
multiplicity 1.
\item If $\alpha =\frac{1}{2},$ there are four different
fixed points: $z=0$, $z=\infty$ and $z=1$ with multiplicity 1 and
$z=-1$ with multiplicity 2.
\item If $\alpha=\frac{5}{2},$ there are 3 different fixed points: $z=0$ and $z=\infty$ with multiplicity 1 and
$z=1$ with multiplicity 3.
\end{itemize}
As we will see in the next section, the multiplicity of the fixed points implies different
dynamical behaviors.

In order to study the stability of the fixed points, we calculate the first derivative of $O_p(z)$,

\begin{equation}\label{opder}
O_{p}^{\prime }\left( z\right) =2z^{2}\frac{3\left( 1-\alpha \right)
+2z\left( 3-4\alpha +2\alpha ^{2}\right) +3z^{2}\left( 1-\alpha \right) }{
\left( 1-2\left( \alpha -1\right) z\right) ^{2}}.
\end{equation}

From $\left( \ref{opder}\right) $ we obtain that the origin and $\infty $
are always superattractive fixed points, but the stability of the other
fixed points changes depending on the values of the parameter $\alpha$.
These points are called \emph{strange fixed points}.

The operator $O_{p}^{\prime }\left( z\right)$ in $z=1$ gives
\begin{equation}\label{stabilidaddel1}
\left| O_{p}^{\prime }\left( 1\right) \right| =\left| 4\frac{(-2+\alpha)(2\alpha -3) }{(2\alpha -3)^2}\right|=\left| \frac{4\alpha -8}{2\alpha -3}\right| .
\end{equation}

If we analyze this function, we obtain an horizontal asymptote in $\left| O_{p}^{\prime }\left( 1\right) \right|= 2$, when $\alpha
\rightarrow \pm \infty$, and a vertical asymptote in $\alpha=\frac{3}{2}$.
In the following result we present the stability of the fixed point $z=1$.

\begin{proposition}\label{lemaestabilidad1}
The fixed point $z=1$ satisfies the
following statements :
\begin{itemize}
\item[i)]  If $\left| \alpha -\frac{13}{6}\right| <\frac{1}{3}$, then $z=1$ is an
attractor and, in particular, it is a superattractor for $\alpha =2$.
\item[ii)]  If $\left| \alpha -\frac{13}{6}\right| =\frac{1}{3}$, then $z=1$ is a parabolic point.
\item[iii)] If $\left| \alpha -\frac{13}{6}\right| > \frac{1}{3}$, then $z=1$ is a repulsive fixed point.
\end{itemize}
\end{proposition}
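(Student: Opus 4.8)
The plan is to analyze the already-computed quantity
\[
\left| O_{p}^{\prime }\left( 1\right) \right| = \left| \frac{4\alpha -8}{2\alpha -3}\right|
\]
from equation (\ref{stabilidaddel1}), and determine for which complex values of $\alpha$ this modulus is less than, equal to, or greater than $1$. The three cases in the proposition correspond exactly to the trichotomy defining attractor, parabolic point, and repulsor from the stability definitions recalled in the preliminaries. So the entire proof reduces to solving the inequality $|4\alpha-8| \lessgtr |2\alpha-3|$ in the complex plane.

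First I would set up the boundary case, $|4\alpha-8| = |2\alpha-3|$, which is the equation for the parabolic points. Writing $\alpha = x+iy$, I would square both sides to get $|4\alpha-8|^2 = |2\alpha-3|^2$, i.e. $16|\alpha-2|^2 = |2\alpha-3|^2$. Expanding both sides as $16\left((x-2)^2+y^2\right) = (2x-3)^2 + 4y^2$ and simplifying, the $x^2$ and $y^2$ terms will combine so that the locus is a circle. The key computation is to complete the square and read off that the resulting circle is centered at $\alpha = \tfrac{13}{6}$ with radius $\tfrac{1}{3}$, so that the boundary equation becomes exactly $\left|\alpha - \tfrac{13}{6}\right| = \tfrac{1}{3}$. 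This is the one genuinely computational step, but it is routine algebra rather than a conceptual obstacle.

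Once the boundary circle $\left|\alpha-\tfrac{13}{6}\right|=\tfrac{1}{3}$ is identified, statement (ii) follows immediately, since parabolic points are defined by $|O_p'(1)|=1$. For statements (i) and (iii) I would argue by connectedness: the complement of the circle in the $\alpha$-plane has an interior disk and an unbounded exterior, and on each connected component the sign of $|O_p'(1)|-1$ is constant (the function is continuous away from the pole at $\alpha=\tfrac32$, which I should check lies strictly outside the disk of radius $\tfrac13$ about $\tfrac{13}{6}$, and indeed $\tfrac32 = \tfrac{9}{6}$ is at distance $\tfrac{4}{6}=\tfrac23>\tfrac13$). It then suffices to test one representative point in each region. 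For the interior I would evaluate at $\alpha=2$, where the numerator $4\alpha-8$ vanishes, giving $|O_p'(1)|=0<1$; this simultaneously confirms the superattractor claim in (i). For the exterior I would test a point such as $\alpha=0$ (Chebyshev's method), where $|O_p'(1)| = \left|\tfrac{-8}{-3}\right| = \tfrac{8}{3}>1$, establishing the repulsive case (iii).

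I do not anticipate a serious obstacle: the main content is the completion-of-the-square identification of the circle, and the only subtlety to handle carefully is the pole at $\alpha=\tfrac32$, which must be verified to sit in the repulsive region so that the continuity/connectedness argument for the exterior component is valid. I would also remark that the horizontal asymptote $|O_p'(1)|\to 2$ as $\alpha\to\pm\infty$ is consistent with the exterior region being the repulsive one, since $2>1$.
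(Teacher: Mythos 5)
Your proposal is correct, and its computational core --- squaring $\left|4\alpha-8\right| = \left|2\alpha-3\right|$, expanding with $\alpha = x+iy$, and completing the square to identify the circle $\left|\alpha-\tfrac{13}{6}\right| = \tfrac{1}{3}$ --- is exactly the computation in the paper. Where you diverge is in how the two strict cases are settled. The paper carries the inequality through that same algebra: it works with $4\left|\alpha-2\right| \leq \left|2\alpha-3\right|$ in multiplied-out form and arrives at $\left(a-\tfrac{13}{6}\right)^{2}+b^{2} \leq \tfrac{1}{9}$, so the whole trichotomy falls out of the computation at once, uniformly in $\alpha$. You instead prove only the equality case algebraically and then resolve the interior and exterior regions by continuity, connectedness, and the test points $\alpha=2$ and $\alpha=0$; this is a perfectly valid and arguably more conceptual finish (it is the standard way one maps stability regions), but it carries topological obligations the paper's version avoids.

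Two such obligations deserve explicit attention. First, for the exterior component your continuity argument runs on the exterior minus the point $\alpha=\tfrac32$; you should say that removing a point from a connected open planar set leaves it connected, so the constant-sign argument still applies there. Second, and more substantively, $\alpha=\tfrac32$ is not genuinely a pole of the stability function: the formula $O_p'(1)=\frac{4\alpha-8}{2\alpha-3}$ is obtained by cancelling a factor of $(2\alpha-3)$, and at $\alpha=\tfrac32$ the uncancelled expression is $0/0$. There the operator degenerates to $O_p(z)=-z^{3}$, so $O_p'(1)=-3$, a finite value. Your implicit reasoning ``the modulus blows up near $\tfrac32$, hence that point is repulsive'' assigns the point the wrong value; the repulsive conclusion is correct but requires this separate direct evaluation, since $\alpha=\tfrac32$ does lie in the region covered by statement (iii). (To be fair, the paper's own proof rests on the same cancelled formula and glosses over this same point; its multiplied-out inequality merely happens never to divide by $\left|2\alpha-3\right|$, so it classifies $\alpha=\tfrac32$ correctly without comment.)
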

\proof From equation (\ref{stabilidaddel1}),
\[
\left| 4\frac{-2+\alpha }{2\alpha
-3}\right| \leq 1\Rightarrow 4\left| -2+\alpha\right| \leq \left| 2\alpha
-3\right| .
\]

Let $\alpha =a+ib$ be an arbitrary complex number. Then,
\[
\left| -2+\alpha\right| ^{2}=\left(
-2+a\right) ^{2}+b^{2}
\]
and
\[
\left| 2\alpha -3\right| ^{2}=\left(
2a-3\right) ^{2}+4b^{2}.
\]
So,
\[16\left( 4-4a+a^{2}+b^{2}\right) \leq
4a^{2}-12a+9+4b^{2}.
\]
By simplifying
\[
55-52a+12a^{2}+12b^{2}= 12\left( a-\dfrac{13}{6}\right) ^{2}+12b^{2}-\frac{4}{3}\leq 0,
\]
that is,
\[
  \left( a-\dfrac{13}{6}\right) ^{2}+b^{2}\leq \dfrac{1}{9}.
\]
Therefore,
\[
\left| O_{p}^{\prime }\left( 1\right) \right| \leq 1 \ \ \mbox{if  and only if} \ \ \left| \alpha -\frac{13}{6}\right| \leq \frac{1}{3}.
\]
Finally, if $\alpha$ satisfies $\left| \alpha -\frac{13}{6}\right| > \frac{1}{3}$, then $\left| O_{p}^{\prime }\left( 1\right) \right| >1$
and $z=1$ is a repulsive point.
\endproof

The stability of the other strange fixed points $z=s_i$, $i=1,2$ also depends on parameter $\alpha$.
\begin{equation*}\label{estExtr}
\left| O_{p}^{\prime }( s_i) \right| =\left| 6-2\alpha \right|.
\end{equation*}
We can establish the following result:

\begin{proposition} \label{lemaestabilidadext}
The fixed points $z=s_i$, $i=1,2$ satisfy the following statements:
\begin{itemize}
\item[i)]  If $\left| \alpha -3\right| <\frac{1}{2}$, then $s_{1}$ and $s_{2}$
are two different attractive fixed points. In particular, for $\alpha =3$,
$s_{1} $ and $s_{2}$ are superattractors.
\item[ii)]  If $\left| \alpha -3\right| =\frac{1}{2}$, then $s_{1}$ and $s_{2}$ are parabolic points. In particular, for $\alpha =\frac{5}{2}$, $
s_{1}=s_{2}=1.$
\item[iii)] If $\left| \alpha -3\right| > \frac{1}{2}$, then $s_{1}$ and $s_{2}$ are repulsive
fixed points.
\end{itemize}
\end{proposition}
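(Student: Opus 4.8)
The plan is to follow the same template as Proposition~\ref{lemaestabilidad1}: first pin down the exact value of the multiplier $O_p'(s_i)$, and then translate the stability trichotomy into a disk condition in the $\alpha$-plane. First I would evaluate the derivative (\ref{opder}) at $z=s_i$. The key algebraic input is that $s_1$ and $s_2$ are the roots of $z^2+(3-2\alpha)z+1=0$, so each $s_i$ satisfies $s_i^2=(2\alpha-3)s_i-1$, together with $s_1 s_2=1$ and $s_1+s_2=2\alpha-3$. Using $s_i^2=(2\alpha-3)s_i-1$ repeatedly, I would reduce both the degree-four numerator $2z^2\bigl(3(1-\alpha)+2z(3-4\alpha+2\alpha^2)+3z^2(1-\alpha)\bigr)$ and the denominator $(1-2(\alpha-1)z)^2$ of (\ref{opder}) to linear polynomials in $s_i$, and then simplify the resulting quotient. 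The target is the clean identity $O_p'(s_i)=6-2\alpha$ for $i=1,2$, which gives $\left|O_p'(s_i)\right|=\left|6-2\alpha\right|=2\left|\alpha-3\right|$. This reduction is the main obstacle: the cancellation is not visible term-by-term and relies on the quadratic relation holding on the nose, so the bookkeeping of the powers of $s_i$ has to be carried out carefully; the pleasant feature is that the final value is independent of which root is chosen.

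Once $\left|O_p'(s_i)\right|=2\left|\alpha-3\right|$ is in hand, the three statements are immediate. The point $s_i$ is attractive exactly when $2\left|\alpha-3\right|<1$, i.e. $\left|\alpha-3\right|<\tfrac12$, which is part~(i); it is superattractive precisely when the multiplier vanishes, i.e. $6-2\alpha=0$, that is $\alpha=3$, a value lying in the interior of that disk. Parabolicity corresponds to $2\left|\alpha-3\right|=1$, i.e. $\left|\alpha-3\right|=\tfrac12$ (statement~(ii)), and the repulsive case to $\left|\alpha-3\right|>\tfrac12$ (statement~(iii)).

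It remains to justify the parenthetical claims about the fixed points being two \emph{different} points and about their coalescence. From the analysis in Section~2, $s_1=s_2$ occurs only for $\alpha=\tfrac12$ and $\alpha=\tfrac52$. Since $\left|\tfrac12-3\right|=\tfrac52>\tfrac12$, the degenerate value $\alpha=\tfrac12$ never lies in the relevant region, while $\left|\tfrac52-3\right|=\tfrac12$ places $\alpha=\tfrac52$ exactly on the boundary circle. Consequently, throughout the open disk $\left|\alpha-3\right|<\tfrac12$ of part~(i) the roots $s_1$ and $s_2$ are genuinely distinct, and at the boundary value $\alpha=\tfrac52$ of part~(ii) they collapse to $s_1=s_2=1$, in agreement with the multiplicity-$3$ description given earlier. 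This confirms the statement in full.
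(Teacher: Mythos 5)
Your proposal is correct and is essentially the paper's own approach: the paper merely asserts the multiplier identity $\left|O_p'(s_i)\right|=\left|6-2\alpha\right|$ just before the proposition and leaves both its derivation and the disk translation implicit, while you supply the missing algebra — and your reduction via $s_i^2=(2\alpha-3)s_i-1$ does work, since the numerator and denominator of (\ref{opder}) both reduce to multiples of $4(\alpha-1)(\alpha-2)s_i-(2\alpha-3)$, giving $O_p'(s_i)=6-2\alpha$ exactly. Your handling of the trichotomy, of the superattracting value $\alpha=3$, and of the distinctness/coalescence of $s_1,s_2$ (degenerate only at $\alpha=\frac{1}{2}$, outside the disk, and $\alpha=\frac{5}{2}$, on its boundary) is likewise correct.
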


In the following bifurcation diagram (Figure \ref{bifpuntosfijos}) we represent the behavior of the fixed point for real values of parameter $\alpha$.
The point $z=\infty$ is not represented. Let us observe that the stability of the fixed points is represented by the thickness of the lines: if it is attractive,
the line corresponding to the value of this strange point is thicker. So, it can be noticed that $z=0$ is always an attractor, meanwhile $z=1$ is
attractive when $\frac{11}{6}<\alpha<\frac{5}{2}$ and $s_i$, $i=1,2$ are
attractors only when $\frac{5}{2}<\alpha<\frac{7}{2}$.

\begin{figure}[h]
\begin{center}
  \includegraphics[width=8cm]{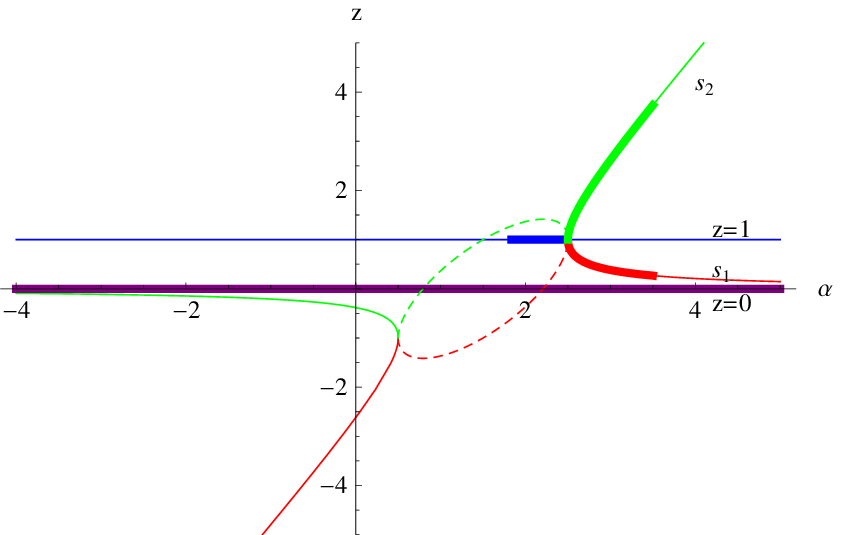}
  \caption{Bifurcation diagram of fixed
points}\label{bifpuntosfijos}
\end{center}
\end{figure}

\section{Study of the critical points}

Let us remember that the critical points of $O_p(z)$ are the roots of $O_p^{\prime }(z)=0$, that is, $z=0$, $z=\infty$, and
\begin{equation}\label{criticos}
z=\frac{3-4\alpha +2\alpha
^{2}\pm \sqrt{-6\alpha +19\alpha ^{2}-16\alpha ^{3}+4\alpha ^{4}}}{3\left(
\alpha -1\right) },
\end{equation}
which are denoted by $c_{1}$ and $c_{2}$.

It is easy to prove that $c_{1}=\dfrac{1}{c_{2}}$. Therefore, both critical points
coincide only when $c_{1}=c_{2}=\pm 1$, that is, when

\begin{equation*}
   -6\alpha +19\alpha^{2}-16\alpha ^{3}+4\alpha^{4}=0.
\end{equation*}
The roots of this equation are $0$, $\dfrac{1}{2}$, $\dfrac{3}{2}$ and $2$.

It is known that there is at least one critical point associated with each invariant Fatou component. As $z=0$ and $z=\infty$ are
both superattractive fixed points of $O_p(z)$, they also are critical points and give rise to their respective Fatou components.
For the other critical points, we can establish the following remarks:

\begin{itemize}
\item[a)]  If $\alpha =0$, then $c_{1}=c_{2}=-1$, and it is a pre-image of the fixed point $z=1$: $O_{p}\left( -1\right) =1$. As $z=1$ is repulsive,
$z=-1 \in \mathcal{J}(O_p)$. So, $O_p(z)$ has precisely two invariant Fatou components, $\mathcal{A}\left( 0 \right)$
and $\mathcal{A}\left( \infty\right)$.
\item[b)]  If $\alpha =\frac{1}{2}$, then $c_{1}=c_{2}=-1=s_{1}=s_{2}$ are
repulsive fixed points and belong to Julia set.
\item[c)]  If $\alpha =\frac{3}{2}$, then $c_{1}=c_{2}=1$ is a repulsive fixed point and belong to Julia set.
\item[d)]  If $\alpha =2$, $c_{1}=c_{2}=1$. In this case $z=1$ is a superattractor, which gives rise to a Fatou component.
\item[e)]  For any other value of $\alpha \in \mathbf{C}$, there are four different critical points, and we will study their behavior in Section 4.
\end{itemize}

In Figure \ref{ext}, we represent the behavior of the strange fixed points
and critical points for real values of $\alpha$ between $1$ and $4$. We observe that the critical points $c_i$, $i=1,2$ are inside the
basin of attraction of $z=1$ when it is attractive ($\frac{11}{6}<\alpha<\frac{5}{2}$) and coincide with $z=1$ for $\alpha=2$. Then,
they move to the basins of attraction of $s_1$ and $s_2$ when these fixed points become attractive ($\frac{5}{2}<\alpha<\frac{7}{2}$),
critical and fixed points coincide for $\alpha=3$ and $s_1$ and $s_2$  become superattractors.
\begin{figure}[h]
\begin{center}
  \includegraphics[width=8cm]{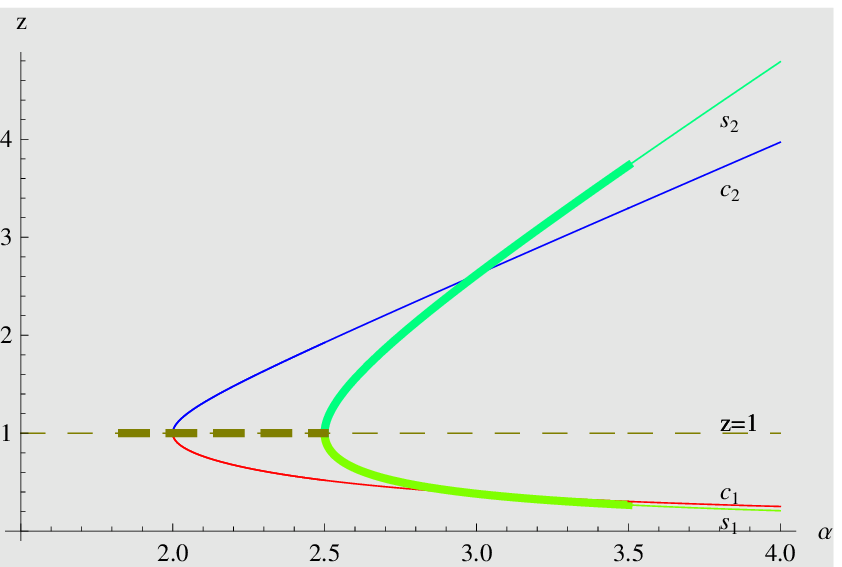}\\
  \caption{Dynamical behavior of strange fixed points and critical points for $1< \alpha <4$}\label{ext}
  \label{estranos y criticos}
  \end{center}
  \end{figure}

Moreover, we can see that when $\alpha \rightarrow 1$, $c_{1}$ tends
to $0$ and $c_{2}$ tends to $\infty $. This fact explains that $
O_{p}\left( z\right) =z^{4}$ when $\alpha=1$ (super-Halley's method), and the only superattractive fixed points were $0$
and $\infty$.

Finally, if $\alpha \rightarrow  \pm \infty $, $c_{1}$ tends to $0$ and $c_{2}$ tends to $\pm \infty $ and $O_{p}\left( z\right) =z^{2}$.

\section{The parameter space}

It is easy to see that the dynamical behavior of operator $O_p(z)$
depends on the values of the parameter $\alpha$. In Figure \ref{planoparametro}, we can see the
parameter space associated to family (\ref{ChH}): each point of the parameter plane is associated to a complex value of $\alpha$, i.e.,
to an element of family (\ref{ChH}). Every value of $\alpha$ belonging to the same connected component of the parameter space give rise to
subsets of schemes of family (\ref{ChH}) with similar dynamical behavior.
\begin{figure}[h]
\begin{center}
  \includegraphics[width=7cm]{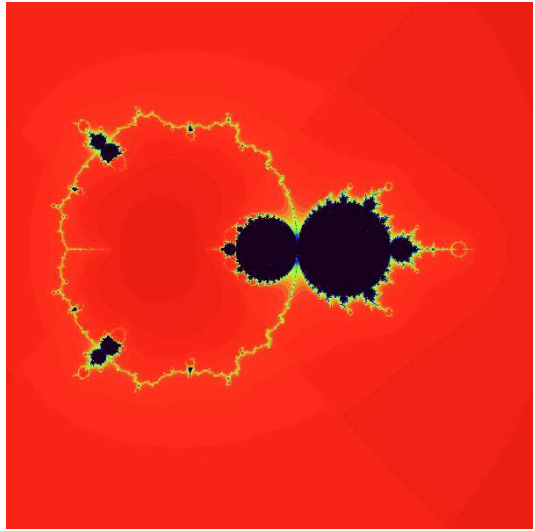}\\
  \caption{Parameter plane}\label{planoparametro}
   \end{center}
\end{figure}

In this parameter space we observe a black figure (let us to call it \emph{
the cat set}), with a certain similarity with the known Mandelbrot set (see \cite{devaney}): for values
of $\alpha $ outside this cat set we will see, numerically, that the Julia set is disconnected. The two
disks in the main body of the cat set correspond to the $\alpha$
values for those the fixed points $z=1$ (the head)  and $z=s_{1}$ and $z=s_{2}$ (the body)
become attractive. Let us observe that the head and the body are surrounded by bulbs, of different sizes, that yield to the appearance of
attractive cycles of different periods.

We also observe a closed curve that passes through the cat's neck,
we call it \emph{the necklace}. As we will
prove in the following, the dynamical planes for values of $\alpha$ inside this curve are
topologically equivalent to disks.

\subsection{The head of the cat set}

The head of the cat corresponds to the values of parameter $\alpha$ for which the
fixed point $z=1$ become attractive, that is, the values of $\alpha$ such that $\left|
\alpha -\frac{13}{6}\right| <\frac{1}{3}.$

In this case, the fixed point $z=1$ is an attractor (Proposition \ref{lemaestabilidad1}) and the other two fixed
points $s_{1}$ and $s_{2}$ are repulsors (Proposition \ref{lemaestabilidadext}). Depending on the values of the
parameter the critical points $c_{1}$ and $c_{2}$ have different behaviors around $z=1$. In Figure \ref{estranos y criticos} we observe
the behavior of the critical points $c_1$ and $c_2$ for real values of parameter $\alpha$ in the interval $(\frac{11}{6},2)$.
For $\frac{11}{6}<\alpha <2$ both critical points are complex. When $\alpha =2$,
both critical points coincide with the fixed point $z=1,$ so that, it is an
superattractor. 



%

\subsubsection{The boundary of the head }

As it has been established in Proposition \ref{lemaestabilidad1}, the boundary of the previous set ($\left| \protect\alpha -\frac{
13}{6}\right| =\frac{1}{3}$) is the loci of bifurcation of the fixed point $z=1$. This fixed point is parabolic on this boundary and
yields to the appearance of attractive cycles, as it happens in Mandelbrot set when we move into the bulbs (see \cite{devaney}).

In this region, $\alpha =\frac{13}{6}+
\frac{1}{3}e^{i\theta }$ and the operator (\ref{op}) can be expressed as:
\begin{equation}
\allowbreak O_{p}\left( z\right) =z^{3}\frac{z-\left( \frac{7}{3}+\frac{2}{3}
e^{i\theta }\right) }{1-\left( \frac{7}{3}+\frac{2}{3}e^{i\theta }\right) z}.
\end{equation}

The first derivative is:
\begin{equation*}
O_{p}^{\prime }\left( z\right) =z^{2}\frac{-63+134z-18e^{i\theta
}-63z^{2}+56ze^{i\theta }-18z^{2}e^{i\theta }+8ze^{2i\theta }}{\left(
-3+7z+2ze^{i\theta }\right) ^{2}}
\end{equation*}
and it is easy to check that $z=1$ is an parabolic point for all these
values of $\alpha$, since
\begin{equation*}
O_{p}^{\prime }\left( 1\right) =\frac{2e^{i\theta }+1}{2+e^{i\theta }} \ \ \mbox{and} \ \    \left| O_{p}^{\prime }\left( 1\right) \right| =1.
\end{equation*}

Therefore, for different values of $\theta$, we find the different
bulbs with attractive cycles surrounding ''the head of the cat''

In this boundary there exist two points that are specially interesting: they correspond to the intersection with the real axe: $\theta =\pi$ and $\theta =0$.
In the first case, $\alpha =\frac{11}{6}$, the operator has the expression
\begin{equation*}
O_{p}\left( z\right) =\allowbreak z^{3}\frac{3z-5}{3-5z}.
\end{equation*}
For this value of $\alpha$, the two strange fixed points $s_{i}$ $i=1,2$ are repulsive. The point $z=1$ is parabolic (since $
O_{p}^{\prime }\left( 1\right) =-1$), and it is in the common boundary of two
parabolic regions 
: the
elements of the orbit corresponding to an initial estimation in one of these parabolic regions, go alternatively from one
region to the other while approaching to the parabolic point $z=1$. 
If $\alpha$ is close to but lower than $\frac{11}{6}$, $z=1$ is a
repulsive fixed point (see Proposition \ref{lemaestabilidad1}). As
it happens in Mandelbrot set, when we take a value of $\alpha$ in
this bulb, an attractive cycle of period 2 appears.

For the case $\theta =0,$ $\alpha =\frac{5}{2}$ and
\begin{equation*}
O_{p}\left( z\right) =\allowbreak z^{3}\frac{z-3}{1-3z}.
\end{equation*}
Now, the three strange fixed points are the same, $z=s_{1}=s_{2}=1,$
and it is parabolic, $\left| O_{p}^{\prime }\left( 1\right) \right|
=1$. We know, by the Flower Theorem of Latou (see \cite{milnor}, for
example), that this parabolic point is in the common boundary of two
attractive regions. The orbits of initial estimations inside each
region approach to $z=1$ without leaving its region.
These attractive areas contain the respective critical points $c_{1}=\frac{2}{9}\left( \frac{11}{2}
-\sqrt{10}\right)$ and $c_{2}=\frac{2}{9}\left(
\frac{11}{2}+\sqrt{10}\right)$.

\subsection{The body of the cat set}

As we have said before, the body of the cat set corresponds to values of the
parameter such that $\left| \alpha -3\right| <\frac{1}{2}.$ In this case,
\begin{equation*}
O_{p}\left( z\right) =z^{3}\frac{z-2\left( \alpha -1\right) }{1-2\left(
\alpha -1\right) z},
\end{equation*}
the fixed point $z=1$ is a repulsor (Proposition \ref{lemaestabilidad1}) and $s_{i}$, $i=1,2$ are attractors
(Proposition \ref{lemaestabilidadext}). So, they have their own basins of attraction with a critical point in each one
(see Figure \ref{estranos y criticos}).

We know that, for $\alpha =3$,
\begin{equation*}
O_p\left( z\right) =\allowbreak z^{3}\frac{z-4}{1-4z},
\end{equation*}
$z=1$ is a repulsor and $s_{i}=c_{i}$, $i=1,2$ are superattractors. 
%

\subsubsection{The boundary of the body}

\smallskip
Similarly to what happen in the Mandelbrot set, the boundary of the
cat set is exactly the bifurcation locus of the family of Chebyshev-Halley
operators acting on quadratic polynomials; that is, the set of parameters
for which the dynamics changes abruptly under small changes of $\alpha$.

This boundary correspond to values  $\left|\alpha
-3\right| =\frac{1}{2}$, that is, $\alpha = 3+\frac{1}{2}e^{i\theta }$.

The strange fixed points $s_i$, $i=1,2$ are parabolic. So, the
different values of the argument $\theta$ give the bifurcation points for the
different bulbs surrounding the body of the cat set.

Two interesting values of $\alpha$ are the intersection between this boundary and the real axe: they correspond to $\theta =0$ and $\theta =\pi$.
If $\theta =0$,  $\alpha =\frac{7}{2}$ and
\begin{equation*}
O_{p}\left( z\right) =\allowbreak z^{3}\frac{z-5}{1-5z}.
\end{equation*}
In this case, $z=1$ is a repulsor and the two strange fixed points $s_{1}$ and $s_{2}$ are parabolic.
Each of these points is in the common boundary of
two parabolic regions, the iterations of the orbit of an initial estimation inside one of these regions go alternatively from one
area to the other, while approaching to the parabolic point. 

In the parameter space, the bulb corresponding to values of $\alpha$ bigger and close to $\frac{7}{2}$ is the loci of the cycles of period 2.

When $\theta =\pi$, $\alpha =\frac{5}{2}$ and this value of the parameter (see Figure \ref{planoparametro})
corresponds to the intersection between the boundaries of the body and the head of the cat. So, the point $\alpha =\frac{5}{2}$
is a bifurcation point and the dynamics changes when the parameter $\alpha $ varies in a small interval around $\frac{5}{2}$ 
and it has been studied in the previous section.

\subsection{Inside the necklace}

For values of the parameter inside the necklace, by applying Propositions \ref{lemaestabilidad1} and \ref{lemaestabilidadext},
the only superattractive fixed points are $0$ and $\infty $; the Julia set is connected  but we see that for different
values of $\alpha $  the fixed points $0$ and $\infty $ have one connected component in each basin of attraction.
As we will see in Proposition \ref{zn}, if $\frac{1}{2}<\alpha<\frac{3}{2}$, these connected components are disks in Riemann sphere.
In other cases, they are topologically equivalent to disks. In the following section, we prove this statement
for $\left| \alpha -1\right| <\frac{1}{2}$, although it can be checked numerically that it is also true in the rest of the area inside the necklace.

\subsubsection{The region $\left| \alpha -1\right| \leq \frac{1}{2}$}\label{discos}

In this area,
\begin{equation*}
O_{p}\left( z\right) =z^{3}\frac{z-2\left( \alpha -1\right) }{1-2\left(
\alpha -1\right) z},
\end{equation*}
where $\alpha =1+re^{i\theta },$ $r\leq \frac{1}{2}$.

\begin{proposition}\label{zn}
If $\left| \alpha -1\right| \leq \frac{1}{2}$, then the dynamical plane is the same as the one of $z^{n}.$
\end{proposition}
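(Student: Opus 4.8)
The plan is to show that, throughout the region $|\alpha-1|\le\frac12$, the only attractors are the superattracting fixed points $0$ and $\infty$, that every other critical point falls into their basins, and that this forces the Fatou set to be exactly two simply connected basins separated by a Jordan curve --- which is precisely the dynamical plane of $z^n$. First I would set $a=2(\alpha-1)$, so the hypothesis reads $|a|\le 1$ and $O_p(z)=z^{3}\dfrac{z-a}{1-az}$. Comparing $|\alpha-1|\le\frac12$ with the disks $|\alpha-\frac{13}{6}|<\frac13$ and $|\alpha-3|<\frac12$ of Propositions \ref{lemaestabilidad1} and \ref{lemaestabilidadext}, which are disjoint from it, the strange fixed points $z=1$, $s_1$, $s_2$ are repelling here, so $0$ and $\infty$ are the only attractors. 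I would also record the symmetry $O_p(1/z)=1/O_p(z)$: the involution $\sigma(z)=1/z$ commutes with $O_p$ and exchanges $\mathcal A(0)$ with $\mathcal A(\infty)$, so it suffices to analyse one basin.

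Next I would locate the free critical points $c_1=1/c_2$ of (\ref{criticos}). By the symmetry it is enough to prove $c_1\in\mathcal A(0)$, and then $c_2\in\mathcal A(\infty)$ follows automatically. Since Propositions \ref{lemaestabilidad1}--\ref{lemaestabilidadext} guarantee that no attracting or parabolic cycle other than $\{0\}$ and $\{\infty\}$ exists in this region, the orbit of $c_1$ cannot be captured by anything else and must converge to one of the two superattractors; continuity from $\alpha=1$, where $c_1=0$, $c_2=\infty$ and $O_p=z^{4}$, fixes which one. Consequently every critical point of $O_p$ lies in $\mathcal A(0)\cup\mathcal A(\infty)$. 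By the Fatou--Shishikura classification this rules out parabolic basins, Siegel disks and Herman rings, and Sullivan's theorem excludes wandering domains; hence $\mathcal F(O_p)=\mathcal A(0)\cup\mathcal A(\infty)$.

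Then I would read off the topology. Each full basin is completely invariant, so $O_p|_{\mathcal A(0)}$ is a proper map of degree $4$ whose only ramification comes from $z=0$ (local degree $3$) and $c_1$ (local degree $2$), giving total branching $\delta=2+1=3$. The Riemann--Hurwitz relation $\chi(\mathcal A(0))=4\,\chi(\mathcal A(0))-\delta$ then yields $\chi(\mathcal A(0))=\delta/(4-1)=1$, so $\mathcal A(0)$, and by $\sigma$ also $\mathcal A(\infty)$, is simply connected; their common boundary is the whole Julia set, a Jordan curve. This is exactly the Fatou--Julia structure of $z^n$ (with $n=4$ at $\alpha=1$ and $n=3$ elsewhere, the picture being uniform).

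The main obstacle is the second step --- confining the free critical points to the two basins for every $\alpha$ in the closed region, complex values and the degenerate boundary $a=\pm1$ (where $O_p=\pm z^{3}$) included --- together with the complete invariance and finite connectivity needed to license the Riemann--Hurwitz count. The cleanest way to secure both is to observe that the punctured region $\{|\alpha-1|<\frac12\}\setminus\{1\}$ is a single hyperbolic component, on which the maps move by a holomorphic motion (Ma\~{n}\'{e}--Sad--Sullivan) and are therefore quasiconformally conjugate; it then suffices to verify everything at one real parameter $a\in(-1,1)$, where $O_p$ is a degree-$4$ Blaschke product, $\mathbb{D}$ is completely invariant, $\mathcal J(O_p)=S^1$ and the basins are the disk and its exterior, and to check the special parameters $\alpha=1$ and $a=\pm1$ directly.
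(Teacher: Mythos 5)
Your overall strategy (confine the free critical orbits to the basins of $0$ and $\infty$, then use Fatou--Shishikura, Sullivan and Riemann--Hurwitz to read off the topology) is a legitimate route, and several ingredients are sound: the disks $\left|\alpha-\frac{13}{6}\right|\le\frac13$ and $\left|\alpha-3\right|\le\frac12$ are indeed disjoint from $\left|\alpha-1\right|\le\frac12$, so all three strange fixed points are repelling there, and the symmetry $O_p(1/z)=1/O_p(z)$ is correct. But the argument has a genuine gap at its central step, in two places. First, you assert that Propositions \ref{lemaestabilidad1} and \ref{lemaestabilidadext} ``guarantee that no attracting or parabolic cycle other than $\{0\}$ and $\{\infty\}$ exists in this region.'' They guarantee nothing of the sort: they classify only the \emph{fixed} points, and the whole point of the bulbs of the cat set is that attracting cycles of period $\ge 2$ do occur for suitable $\alpha$. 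Your subsequent appeal to Fatou--Shishikura is circular: the absence of other attracting or parabolic cycles is something one deduces \emph{after} knowing that both free critical orbits converge to $0$ and $\infty$; it cannot be invoked to force that convergence. Second, the proposed repair --- that $\{|\alpha-1|<\frac12\}\setminus\{1\}$ is ``a single hyperbolic component'' over which a Ma\~n\'e--Sad--Sullivan motion transports the Blaschke picture from real parameters --- assumes exactly what must be proved: knowing that this punctured disk meets no bifurcation locus (equivalently, lies in one J-stable or hyperbolic component) is equivalent to knowing that the critical behavior never changes there, i.e.\ to the Proposition itself. The ``continuity from $\alpha=1$'' determination of which basin $c_1$ lies in suffers the same circularity, since continuity of the basins in $\alpha$ is only available on a stability component.

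The paper avoids this by working directly with the factor $m(z)=\frac{z-2(\alpha-1)}{1-2(\alpha-1)z}$: for real $\alpha$ it maps the unit disk into itself, so the disk and its exterior are invariant, and an explicit algebraic inequality shows $|c_1|<1<|c_2|$, placing one critical point in each basin with the unit circle as Julia set; for complex $\alpha$ the paper computes $|z-2(\alpha-1)|^2$ and $|1-2(\alpha-1)z|^2$ on $|z|=1$ and shows that the image of the unit circle is a closed curve separating the two regions of attraction, the critical points again falling one in each region by continuity. If you want to keep your scheme, you must replace the two asserted steps by an estimate of this kind, or by some other direct confinement of the critical orbits valid for every complex $\alpha$ with $|\alpha-1|\le\frac12$; as written, the complex case is not proved.
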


\proof If $\alpha \in \mathbf{R}$, the map $m(z)=\frac{z-2\left( \alpha -1\right) }{1-2\left( \alpha -1\right) z}$
is a M\"{o}ebius map that sends the unit disk in the unit disk. There are
only two basins of attraction, of $0$ and  $\infty $. We are going to prove that the critical point
$c_{1}$ is inside the basin of attraction of $0$ and $c_{2}$ is inside the basin of attraction of $\infty$, that is, we need to prove that
\begin{equation*}
\left| c_{1}\right| =\left| \frac{3-4\alpha +2\alpha ^{2}-\sqrt{-6\alpha
+19\alpha ^{2}-16\alpha ^{3}+4\alpha ^{4}}}{3\left( \alpha -1\right) }%
\right| <1,
\end{equation*}
or equivalently,
\[
\left| 3-4\alpha +2\alpha ^{2}-\sqrt{-6\alpha +19\alpha
^{2}-16\alpha ^{3}+4\alpha ^{4}}\right| <3\left| \left( \alpha -1\right)
\right|.
\]

For $ \frac{1}{2}<\alpha <\frac{3}{2}$, it is easy to prove that $ 3-4\alpha +2\alpha ^{2}-\sqrt{-6\alpha +19\alpha
^{2}-16\alpha ^{3}+4\alpha ^{4}}>0$. So, we need to consider only two cases:
\begin{itemize}
\item[i)]  if $\alpha >1$, we want to demonstrate that
$3-4\alpha +2\alpha ^{2}-\sqrt{-6\alpha +19\alpha ^{2}-16\alpha ^{3}+4\alpha ^{4}}<3\left( \alpha -1\right)$
is verified. By simplifying, it is equivalent to
$\left( 2\alpha -3\right) \left( \alpha -2\right)
<\alpha \left( 2\alpha -1\right)$ and this is equivalent to $6<\allowbreak 6\alpha $.
\item[ii)]  if $\alpha <1$, then we need to prove that
$3-4\alpha +2\alpha ^{2}-\sqrt{-6\alpha +19\alpha ^{2}-16\alpha ^{3}+4\alpha ^{4}}<3\left( -\alpha
+1\right) $ and, in an analogous way as before, it is easy to prove that $\allowbreak 6\alpha <6.$
\end{itemize}

Moreover, as $\left| c_{2}\right| =\dfrac{1}{\left| c_{1}\right| }$ then $%
\left| c_{2}\right| >1.$ So, $c_{1}$ is in the basin of $0$ and $c_{2}$ in
the basin of $\infty .$ The Julia set is the unit circle that divides these
two basins.

If $\alpha$ is a complex number, the map $m\left( z\right) =\frac{z-2\left( \alpha -1\right) }{%
1-2\left( \alpha -1\right) z}$ is not a M\"{o}ebius map, but it is holomorphic. Let us analyze the mapping of unit circle by $m$.
The pole of this map is $z^*=\dfrac{1}{2\left( \alpha -1\right) }$ and, in this case, $|z^*|>1$.

Let $z=x+iy$ be a complex number such that $\left| z\right| =1$ and $\alpha =a+ib$, then, $\left( a-1\right) ^{2}+b^{2}\leq \dfrac{1}{2}$.
Let us see the value of $\left|
m\left( z\right) \right| =\left| \frac{z-2\left( \alpha -1\right) }{1-2\left( \alpha -1\right) z}\right|$.

\begin{eqnarray*}
  \left| z-2\left( \alpha -1\right) \right| ^{2}&=&(x-2a+2)^{2}+\left(y-2b\right) ^{2} \\
                                                &=&1+4\left( \left( a-1\right) ^{2}+b^{2}\right) -4ax+4x-4yb
\end{eqnarray*}
\begin{eqnarray*}
 \left| 1-2\left( \alpha -1\right) z\right| ^{2}&=& \left( 1-2ax+2x+2yb\right) ^{2}+\left( -2ay+2y-2bx\right) ^{2} \\
                                                &=&1+4\left( \left( a-1\right)^{2}+b^{2}\right) -4ax+4x+4yb
\end{eqnarray*}

We observe that both are equal if and only if $b=0$, that
is, in the real case. If $yb>0$ then $\left| m\left( z\right) \right| <1$
and $yb<0$ implies $\left| m\left( z\right) \right| >1.$ As $m\left( z\right) $ is
a holomorphic function, the image of the unit circle is a closed curve that
separates the images of the points inside the unit circle from those that
are outside it. So, the dynamical plane for the values of the parameter inside this range consists on two
regions of attraction, $\mathcal{A}\left( 0\right) $ and $\mathcal{A}\left(
\infty \right) $ separated by this closed curve. As before, by continuity
each critical point is in one of these regions.
\endproof

Therefore, it follows that the dynamical plane of this operator is, for the given values of $\alpha$, equivalent to the one of $z^{n}$.
In particular, we observe that, for quadratic polynomials:
\begin{itemize}
\item for $\alpha =1$, $O_{p}\left( z\right) = z^{4}$,
\item if $\alpha =\frac{1}{2}$ then $O_{p}\left( z\right) =z^{3}$
\item and, for $\alpha =\frac{3}{2}$, $O_{p}\left( z\right) =-z^{3}$.
\end{itemize}

Let us remark that dynamical plane associated to every iterative algorithm whose value of $\alpha$ is inside the necklace, is
topologically equivalent to the previous one. 

\subsection{On the necklace}

We focus our attention on the real values of $\alpha$ included in the necklace. In particular, in those which belong to the antennas of the cat set.
If $0\leq \alpha <\dfrac{1}{2}$ and $\alpha \in \mathbb{R}$, we move in the
left antenna of the necklace in the parameter space, that is in the boundary of the cat set. We prove in the following result that,
in the dynamical plane associated to the iterative methods defined by these values of $\alpha$, there are infinite
connected components of the basins of attraction, corresponding to the immediate bassins of attraction and their pre-images.

\begin{proposition}
The dynamical plane for values of $\alpha \in \mathbb{R}$ and $0\leq \alpha< \frac{1}{2}$ consists in two basins of attraction,
$\textit{A}(0)$ and $\textit{A}(\infty)$ with infinity pre-images.
\end{proposition}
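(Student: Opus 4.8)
The plan is to establish two things: first, that for real $\alpha$ with $0\le\alpha<\frac12$ the only attracting cycles are the superattracting fixed points $0$ and $\infty$, so that the Fatou set is exactly $\mathcal A(0)\cup\mathcal A(\infty)$; and second, that each of these two basins has infinitely many connected components. Throughout I would exploit the symmetry $O_p(1/z)=1/O_p(z)$, which shows that inversion $z\mapsto 1/z$ conjugates $O_p$ to itself while interchanging $0$ and $\infty$; hence $\mathcal A(\infty)$ is the image of $\mathcal A(0)$ under inversion and $c_2=1/c_1$, so it suffices to analyse $\mathcal A(0)$ and the free critical point $c_1$.

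For the first point, note that $0\le\alpha<\frac12$ forces $\left|\alpha-\frac{13}{6}\right|>\frac13$ and $\left|\alpha-3\right|>\frac12$, so by Propositions~\ref{lemaestabilidad1} and~\ref{lemaestabilidadext} the three strange fixed points $z=1$, $s_1$ and $s_2$ are all repulsive. Since $O_p$ has degree $4$, its only critical points are $0,\infty,c_1,c_2$, and the superattractors $0$ and $\infty$ already absorb the critical points $0$ and $\infty$. I would then show that the forward orbits of $c_1$ and $c_2$ accumulate only on $\{0,\infty\}\cup\mathcal J(O_p)$, so that no critical point remains to support a further attracting or parabolic basin, Siegel disk or Herman ring. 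By the classification of periodic Fatou components this leaves $\mathcal A(0)$ and $\mathcal A(\infty)$ as the only basins, and every Fatou component is then a preimage of one of them.

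For the second point --- the real content of the statement --- the key is a degree mismatch. Near $z=0$ one has $O_p(z)=2(1-\alpha)z^3+O(z^4)$, so $0$ is superattracting of local degree $3$, strictly less than the global degree $4$. The decisive claim is that the free critical point $c_1$ does \emph{not} lie in the immediate basin $\mathcal A^\ast(0)$, in sharp contrast with the situation inside the necklace, where Proposition~\ref{zn} places $c_1$ in $\mathcal A^\ast(0)$ and the basin is a single disk. Granting this, the B\"ottcher coordinate at $0$ extends over all of $\mathcal A^\ast(0)$, so $O_p\colon\mathcal A^\ast(0)\to\mathcal A^\ast(0)$ is conformally $w\mapsto w^3$ and has degree $3$. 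Since $3<4$, the full preimage $O_p^{-1}\!\left(\mathcal A^\ast(0)\right)$ must contain a component $V_1\neq\mathcal A^\ast(0)$ mapping onto $\mathcal A^\ast(0)$, and this $V_1$ is a new component of $\mathcal A(0)$. Pulling back repeatedly produces at each stage components not previously counted, yielding infinitely many distinct components of $\mathcal A(0)$ --- the immediate basin and its successive preimages --- and, by the inversion symmetry, of $\mathcal A(\infty)$ as well.

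The main obstacle is precisely the claim $c_1\notin\mathcal A^\ast(0)$, together with the orbit control required in the first point. At the endpoint $\alpha=0$ this is transparent: there $c_1=c_2=-1$ is a preimage of the repulsive fixed point $z=1$, hence $c_1\in\mathcal J(O_p)$ and a fortiori $c_1\notin\mathcal A^\ast(0)$, and $O_p$ has exactly the two invariant Fatou components $\mathcal A(0)$ and $\mathcal A(\infty)$. For $0<\alpha<\frac12$ I would extend this conclusion by continuity of the critical orbit in $\alpha$, using that for real $\alpha$ the unit circle is invariant --- a direct computation gives $\left|O_p(z)\right|=1$ whenever $\left|z\right|=1$ --- and carries the repelling fixed point $z=1$, whose dense set of preimages lies in $\mathcal J(O_p)$ and shapes the intricate boundary between the two basins. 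Establishing uniformly over the interval that the critical orbit is captured by $\mathcal J(O_p)$ or by $\{0,\infty\}$ but never enters the immediate basins is the delicate step.
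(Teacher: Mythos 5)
Your plan is architecturally reasonable, but it has a genuine gap at exactly the point you yourself flag as ``the decisive claim'': nothing in the proposal proves, for $0<\alpha<\frac{1}{2}$, that the free critical points stay out of the immediate basins, nor the related orbit control needed in your first part to exclude other attracting or parabolic cycles, Siegel disks and Herman rings. The continuity argument you sketch cannot close it: at $\alpha=0$ the coincidence $c_1=c_2=-1$ with $O_p(-1)=1$ (a critical point that is a preimage of a repelling fixed point) is a non-generic configuration destroyed by any perturbation of $\alpha$, and ``the critical orbit is captured by $\mathcal{J}(O_p)$'' is not a property that propagates along a parameter interval by continuity of finitely many iterates --- an orbit that passes near a repelling fixed point can subsequently go anywhere. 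So both halves of your argument rest on an unproved statement.

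The paper closes precisely this gap with a short computation that your proposal is missing. For real $\alpha$ the radicand in (\ref{criticos}) factors as $\alpha(2\alpha-1)(2\alpha-3)(\alpha-2)$, which is negative for $0<\alpha<\frac{1}{2}$; hence $c_1$ and $c_2$ are complex conjugates of one another, and since $c_2=1/c_1$ this forces
\[
|c_1|^2=c_1\overline{c_1}=c_1c_2=1,
\]
i.e.\ both free critical points lie \emph{on} the unit circle (this is (\ref{criticosvalen1})). Since, as you observed, the unit circle is invariant under $O_p$ for real $\alpha$, the critical orbits remain on it forever, so they never enter $\mathcal{A}(0)$ or $\mathcal{A}(\infty)$ and cannot feed any attractor off the circle: this is exactly the uniform statement you needed, obtained by algebra rather than by perturbation. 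With it in hand, the paper also produces the non-immediate components more directly than your degree count: the pole $z^*=1/\left(2(\alpha-1)\right)$ satisfies $|z^*|=1/\left(2(1-\alpha)\right)<1$ for $0\le\alpha<\frac{1}{2}$, so a preimage of $\infty$ lies strictly inside the unit disk and, by the symmetry $z\mapsto 1/z$, a preimage of $0$ lies outside it; the invariant circle separates these points from $\infty$ and $0$ respectively, so each basin has a component besides its immediate basin, and pulling back (the ``Theorem of Fatou'' cited in the paper) yields infinitely many components. Your degree-mismatch mechanism (local degree $3$ at the superattractor versus global degree $4$) is a valid alternative for that last step, but only once the location of $c_1$, $c_2$ is settled; as submitted, the proposal defers the one fact that makes the proposition true.
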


\begin{proof}
If $0\leq \alpha <\dfrac{1}{2}$ and $\alpha \in \mathbb{R}$, we move in the
left antenna on the necklace, that is in the boundary of the cat set. For these values, the strange fixed points are repulsive, so they belong
to the Julia set. Moreover, the critical points verify:
\begin{equation}
\left| c_{1}\right| ^{2}=\dfrac{\left( 3-4\alpha +2\alpha ^{2}\right)
^{2}+\left( \sqrt{-\alpha \left( \alpha -2\right) \left( 2\alpha -1\right)
\left( 2\alpha -3\right) }\right) ^{2}}{9\left( \alpha -1\right) ^{2}}= 1
\label{criticosvalen1}
\end{equation}
and, as $c_{2}=\dfrac{1}{c_{1}},$ $\left|c_{2}\right|= 1$. So both critical points are on unit circle.

Moreover, the operator (\ref{op}) has a pole in $z^*=\dfrac{1}{2\left( \alpha -1\right) }$ such that $\left| z^* \right| < 1$;
so, there is an image of $\infty $ inside the unit disk and, by symmetry, there is an image of zero outside the
unit disk. So, by the Theorem of Fatou (see \cite{milnor}), they have infinity basins of
pre-images.
\end{proof}

We observe the same dynamical behavior for values of the parameter in the
right antenna of the cat. The reason is that for $\dfrac{3}{2}< \alpha <2$ (whose values include the ones of the right antenna)
and $\alpha \in \mathbb{R}$  we can use the relationship (\ref{criticosvalen1}).

The case of  $\alpha =0$ has the same operator on quadratic polynomial that the ones studied by the authors in \cite{cordero}
for other different iterative methods. The dynamical plane is similar to the previously described.

\subsection{Outside the cat set}

The cat set, as the Mandelbrot set, could also be defined as the
connectedness locus of the family of  rational functions of Chebyshev-Halley
methods. That is, it is the subset of the complex plane consisting of those
parameters for which the Julia set of the  corresponding dynamical plane is connected. All the
dynamical behaviors we have studied for values of the parameter outside the
cat set show disconnected Julia sets. 
%

\section{Conclusions}

We have studied the dynamics of the Chebyshev-Halley family when it is applied on quadratic polynomials. We
have obtained the fixed and critical points and their dynamical behavior, and we have showed that strange fixed points appear which are attractive
for some values of $\alpha$ (Propositions \ref{lemaestabilidad1} and \ref{lemaestabilidadext}). This means that, for these values, these iterative methods
have basins of attraction different of the roots of the polynomial. So,  the initial point must be chosen carefully.

From the parameter plane obtained, we have observed the cat set, with some similarities with Mandelbrot set:
the head and the body of the cat are surrounded by bulbs. For values of the parameters inside the bulbs, different attractive cycles appear.
We have also studied the dynamical behavior of the family for  values of $\alpha$ inside the necklace and we have shown that it is
analogous to the dynamical behavior of Newton's method (Proposition 3). For  values of $\alpha$ on the antennas of the cat the dynamical
plane has only two basins of attraction, but these basins have infinitely many components (Proposition 4). Finally,
we have obtained numerically that the Julia set is disconnected for values of $\alpha$ outside the cat set.

The cat set is a fascinating creature of complex dynamics. Similarly to the Mandelbrot set, there is a lot of
different dynamics for this cat. We study some of its properties in this
paper, but we are aware that there are plenty of unresolved issues, for
example, it is connected the cat set? We conjecture that the cat set is connected.

We have also observed little cats in the necklace. What's about the dynamics for
these values of the parameter? Even more, where are these cats exactly?.
What happens in the antennas for non real values of $\alpha$?

\bigskip

\noindent {\bf Acknowledgement}

The authors would like to thank Mr. Francisco Chicharro for his valuable
help with the numerical and graphic tools for drawing the dynamical planes.

\section{References}


\begin{thebibliography}{99}
\bibitem{ABBP}  S. Amat, C. Berm\'udez, S. Busquier and S. Plaza,
                On the dynamics of the Euler iterative function,
                Applied Mathematics and Computation, 197 (2008) 725-732.

\bibitem{ABP}  S. Amat, S. Busquier and S. Plaza,
               A construction of attracting periodic orbits for some classical third-order
               iterative methods,
               J. of Computational and Applied Mathematics,  189 (2006) 22-33.

\bibitem{GHR} J.M. Guti\'errez, M.A. Hern\'andez and N. Romero,
              Dynamics of a new family of iterative processes for quadratic polynomials,
              J. of Computational and Applied Mathematics,  233 (2010) 2688-2695.

\bibitem{PR}  S. Plaza and N. Romero,
              Attracting cycles for the relaxed Newton's method,
              J. of Computational and Applied Mathematics,  235 (2011) 3238-3244.

\bibitem{werner} W. Werner,
            Some improvements of classical iterative methods for the solution of nonlinear equations, in Numerical Solution of Nonlinear Equations
            (Proc. Bremen 1980), E.L. Allgower, K. Glashoff and H.O. Peitgen, eds., Lecture Notes in Math. 878 (1981) 427-440.

\bibitem{argyros} I.K. Argyros, F. Szidarovszky,
            The theory and application of iterative methods,
            CRC Press, Boca Ratón, FL 1993.

\bibitem{TR} J.F. Traub,
            Iterative methods for resolution of equations,
            Prectice Hall, NJ 1964.

\bibitem{ABG} S. Amat, S. Busquier, J.M. Guti\'errez,
            Geometric construction of iterative functions to solve nonlinear equations,
            Journal of Computational and Applied Mathematics, 157 (2003) 197-205.

\bibitem{gander} W. Gander,
            On Halley's iteration method,
            Amer. Math. Monthly, 92 (1985) 131-134.

\bibitem{HS1} M.A. Hern\'andez, M.A. Salanova,
            A family of Chebyshev-Halley type methods ,
            International Journal of Computer Mathematics, 47(1-2) (1993) 59-63.

\bibitem{HS2} M.A. Hern\'andez, M.A. Salanova,
            A family of Chebyshev type methods in Banach spaces
             International Journal of Computer Mathematics, 61 (1996) 145-154.

\bibitem{YLS} X. Ye, C. Li, W. Shen,
            Convergence of the variants of the Chebyshev-Halley iteration family under the Holder condition of the first derivative,
            Journal of Computational and Applied Mathematics, 203(1) (2007) 279-288.

\bibitem{GH} J.M. Guti\'errez, M.A. Hern\'andez,
            A family of Chebyshev-Halley type methods in Banach spaces,
            Bulletin of the Australian Mathematical Society, 55(1) (1997) 113-130.

\bibitem{osada} N. Osada,
            Chebyshev-Halley methods for analytic functions,
            Journal of Computational and Applied Mathematics 216(2) (2008) 585-599.

\bibitem{K} J. Kou,
            On Chebyshev-Halley methods with sixth-order convergence for solving non-linear equations,
            Applied Mathematics and Computation, 190(1) (2007) 126-131.

\bibitem{KL} J. Kou, Y. Li
            Modified Chebishev-Halley method with sixth-order convergence,
            Applied Mathematics and Computation, 188(1) (2007) 681-685.

\bibitem{AHR} S. Amat, M.A. Hern\'andez, N. Romero,
           A modified Chebyshev's iterative method with at least sixth order of convergence,
            Applied Mathematics and Computation, 206(1) (2008) 164-174.

\bibitem{X} Z. Xiaojian,
            Modified Chebyshev-Halley methods free from second derivative,
            Applied Mathematics and Computation, 203(2) (2008) 824-827.

\bibitem{C} C. Chun,
            Some second-derivative-free variants of Chebyshev-Halley methods,
            Applied Mathematics and Computation, 191(2) (2007) 410-414.

\bibitem{AEGHH} I.K. Argyros, J.A. Ezquerro, J.M. Guti\'errez, M.A. Hern\'andez, S. Hilout,
            On the semilocal convergence of efficient Chebyshev-Secant-type methods,
            Journal of Computational and Applied Mathematics, 235 (2011) 3195-3206.

\bibitem{V} J.L. Varona,
            Graphic and numerical comparison between iterative methods,
            Math. Intelligencer, 24(1) (2002) 37-46.

\bibitem{HPR} G. Honorato, S. Plaza, N. Romero,
            Dynamics of a high-order family of iterative methods,
            Journal of Complexity, 27 (2011) 221-229.

\bibitem{douady} A. Douady and J.H.Hubbard,
            On the dynamics of polynomials-like mappings,
            Ann. Sci. Ec. Norm. Sup. (Paris), 18 (1985) 287-343.

\bibitem{blanchard}  P. Blanchard,
            The Dynamics of Newton's Method,
               Proc. of Symposia in Applied Math.,  49 (1994) 139-154.

\bibitem{SM} S. Smale,
            On the efficiency of algorithms of analysis
            Bull. Allahabad Math. Soc. 13 (1985) 87-121.

\bibitem{MC} C. McMullen,
            Family of rational maps and iteraive root-finding algorithms,
            Ann. of Math. 125 (1987) 467-493.

\bibitem{blanchard2} P. Blanchard.
            Complex Analytic Dynamics on the Riemann Sphere,
            Bull. of the AMS,11(1) (1984) 85-141.

\bibitem{kn} K. Kneisl,
            Julia sets for the super-Newton method, Cauchy's method
            and Halley's method,
            Chaos, 11(2) (2001) 359-370.

\bibitem{devaney} R.L. Devaney,
            The Mandelbrot Set, the Farey Tree and the Fibonacci sequence,
            Am. Math. Monthly, 106(4) (1999) 289-302.


\bibitem{milnor} J. Milnor, Dynamics in one complex variable, Stony Brook IMS preprint (1990).


\bibitem{cordero}A. Cordero, J.R. Torregrosa and P. Vindel.
            On complex dynamics of some third-order iterative methods, Proceedings of the
            International Conference on Computational and Mathematical Methods in Science and Engineering, I (2011) 374-383.

\end{thebibliography}
\end{document}